\begin{document} 
\newtheorem{prop}{Proposition}[section]
\newtheorem{Def}{Definition}[section] \newtheorem{theorem}{Theorem}[section]
\newtheorem{lemma}{Lemma}[section] \newtheorem{Cor}{Corollary}[section]

\title[Maxwell-Chern-Simons-Higgs in Lorenz gauge]{\bf Local solutions with infinite energy of the Maxwell-Chern-Simons-Higgs system in Lorenz gauge}
\author[Hartmut Pecher]{
{\bf Hartmut Pecher}\\
Fachbereich Mathematik und Naturwissenschaften\\
Bergische Universit\"at Wuppertal\\
Gau{\ss}str.  20\\
42119 Wuppertal\\
Germany\\
e-mail {\tt pecher@math.uni-wuppertal.de}}
\date{}

\begin{abstract}
We consider the Maxwell-Chern-Simons-Higgs system in Lorenz gauge and use a null condition to show local well-psoedness for low regularity data. This improves a recent result of J. Yuan.
\end{abstract}
\maketitle
\renewcommand{\thefootnote}{\fnsymbol{footnote}}
\footnotetext{\hspace{-1.5em}{\it 2010 Mathematics Subject Classification:} 
35Q40, 35L70 \\
{\it Key words and phrases:} Maxwell-Chern-Simons-Higgs,  
local well-posedness, Lorenz gauge}
\normalsize 
\setcounter{section}{0}
\section{Introduction and main results}
\noindent The Lagrangian of the (2+1)-dimensional Maxwell-Chern-Simons-Higgs model which was proposed in \cite{LLM} is given by
\begin{align*}
{\mathcal L}  = &-\frac{1}{4} F^{\mu \nu} F_{\mu \nu} + \frac{\kappa}{4} \epsilon^{\mu \nu \rho} F_{\mu \nu} A_{\rho} + D_{\mu}\phi\overline{D^{\mu} \phi} \\
& + \frac{1}{2} \partial_{\mu}N \partial^{\mu} N - \frac{1}{2}(e|\phi|^2+\kappa N - ev^2)^2 - e^2 N^2 |\phi|^2 
\end{align*}
in Minkowski space ${\mathbb R}^{1+2} = {\mathbb R}_t \times {\mathbb R}_x^2$ with metric $g_{\mu \nu} = diag(1,-1,-1)$. We use the convention that repeated upper and lower indices are summed, Greek indices run over 0,1,2 and Latin indices over 1,2. Here 
\begin{align*}
D_{\mu}  & := \partial_{\mu} - ieA_{\mu} \\
 F_{\mu \nu} & := \partial_{\mu} A_{\nu} - \partial_{\nu} A_{\mu} \\
\end{align*}
Here $F_{\mu \nu} : {\mathbb R}^{1+2} \to {\mathbb R}$ denotes the curvature, $\phi : {\mathbb R}^{1+2} \to {\mathbb C}$ and $N: {\mathbb R}^{1+2} \to {\mathbb R}$ are scalar fields, and $A_{\mu} : {\mathbb R}^{1+2} \to {\mathbb R}$ are the gauge potentials. $e$ is the charge of the electron and $\kappa > 0$ the Chern-Simons constant, $v$ is a real constant. We use the notation $\partial_{\mu} = \frac{\partial}{\partial x_{\mu}}$, where we write $(x^0,x^1,...,x^n) = (t,x^1,...,x^n)$ and also $\partial_0 = \partial_t$. $\epsilon^{\mu \nu \rho}$ is the totally skew-symmetric tensor with $\epsilon^{012} = 1$.

The corresponding Euler-Lagrange equations are given by
\begin{align}
\label{2.1}
& \partial_{\lambda} F^{\lambda \rho} + \frac{\kappa}{2} \epsilon^{\mu \nu \rho} F_{\mu \nu} + 2e Im( \phi  \overline{D^{\rho} \phi})  = 0\\
\label{2.2}
&D_{\mu} D^{\mu} \phi  + U_{\overline{\phi}}(|\phi|^2,N) = 0 \\
\label{2.3}
& \partial_{\mu} \partial^{\mu} N + U_N (|\phi|^2,N) = 0 \, ,
\end{align}
where
\begin{align*}
U_{\overline{\phi}}(|\phi|^2,N) &= (e|\phi|^2+\kappa N -ev^2)\phi + e^2 N^2 \phi \\
U_N(|\phi|^2,N) & = \kappa(e|\phi|^2 + \kappa N -ev^2) + 2e^2N |\phi|^2 \, .
\end{align*}
(\ref{2.1}) can be written as follows
\begin{align}
\label{1.1*}
&-\Delta A_0 + \partial_t(\partial_1 A_1 +\partial_2 A_2) + \kappa F_{12} + 2e Im(\phi \overline{D^0 \phi})  = 0 \\
\label{1.2*}
&(\partial_t^2-\partial_2^2)A_1 - \partial_1(\partial_t A_0 -\partial_2 A_2) - \kappa F_{02} + 2e Im(\phi \overline{D^1 \phi}) = 0 \\
\label{1.3*}
&(\partial_t^2-\partial_1^2)A_2 - \partial_2(\partial_t A_0 -\partial_1 A_1) + \kappa F_{01} + 2e Im(\phi \overline{D^2 \phi}) = 0 \, .
\end{align}
The initial conditions are
\begin{align}
\label{IC}
&A_{\nu}(0) = a_{\nu 0} \quad , \quad (\partial_t A_{\nu})(0) = a_{\nu 1}  \quad , \quad \phi(0) = \phi_0 \quad , \quad (\partial_t \phi)(0) = \phi_1 \\ 
\nonumber
&N(0) = N_0 \quad , \quad (\partial_t N)(0) = N_1   \, .
\end{align}
The Gauss law constraint (\ref{1.1*}) requires the initial data to fulfill the following condition:
\begin{equation}
\Delta a_{00} - \partial_1 a_{11} - \partial_2 a_{21} -\kappa(\partial_1 a_{20} - \partial_2 a_{10}) - 2e Im(\phi_0 \overline{\phi}_1) +2e^2 a_{00}|\phi_0|^2 = 0 \,.
\end{equation} 
The energy $E(t)$ of (\ref{2.1}),(\ref{2.2}),(\ref{2.3}) is (formally) conserved, where
\begin{align*}
E(t) = &\int \frac{1}{2} \sum_i F_{0i}^2(x,t) + \frac{1}{2} F_{12}^2(x,t) \\&+  \sum_{\mu} |D_{\mu} \phi(x,t)|^2 + \sum_{\mu} |\partial_{\mu} N(x,t)|^2 + U(|\phi|^2,N)(x,t) dx
\end{align*}
with
$$ U(|\phi|^2,N) = \frac{1}{2}(e|\phi|^2+\kappa N -ev^2)^2 +e^2 N^2 |\phi|^2 \, . $$
There are two possible natural asymptotic conditions to make the energy finite: either the "nontopological" boundary condition
$ (\phi,N,A) \rightarrow (0,\frac{ev^2}{\kappa},0)$ as $|x| \to \infty $
or the "topological" boundary condition
$(|\phi|^2,N,A) \rightarrow (v^2,0,0)$ as $|x| \to \infty \, . $

We decide to study the "nontopological" boundary condition. Replacing $N$ by $N-\frac{ev^2}{\kappa}$ and denoting it again by $N$  we obtain $(\phi,N,A) \to (0,0,0)$ as $|x| \to \infty$, thus leading to solutions in standard Sobolev spaces, and in (\ref{2.2}),(\ref{2.3}) we now have
\begin{align}
\label{*}
U_{\overline{\phi}}(|\phi|^2,N) &= (e|\phi|^2+\kappa N)\phi + e^2(N+\frac{ev^2}{\kappa})^2 \phi \\
\label{****}
 U_N(|\phi|^2,N) &= \kappa (e|\phi|^2 + \kappa N) +2e^2(N+\frac{ev^2}{\kappa}) |\phi|^2 
\end{align}

For the "topological" boundary condition the problem can also be reduced to a system for $(\phi,N,A)$ which fulfills $(\phi,N,A) \to (0,0,0)$ as $|x| \to \infty$ for a modified function $\phi$, if one makes the assumption that $\phi \to \lambda \in{\mathbb C}$ as $|x| \to \infty$ with $|\lambda| = v$. In this case one simply replaces $\phi$ by $\phi - \lambda$. For details we refer to Yuan's paper \cite{Y}. It is easy to see that the system in this case can be studied in the same way as in the "nontopological" case.

The equations (\ref{2.1}),(\ref{2.2}),(\ref{2.3}) are invariant under the gauge transformations
$$ A_{\mu} \rightarrow A'_{\mu} = A_{\mu} + \partial_{\mu} \chi \, , \, \phi \rightarrow \phi' = \exp(ie\chi) \phi \, , \, D_{\mu} \rightarrow D'_{\mu} = \partial_{\mu}-ieA'_{\mu} \, . $$
We consider exclusively the Lorenz gauge
$$ \partial^{\mu} A_{\mu} = 0 \, , $$
so that we have to assume that the data fulfill
\begin{equation}
\label{**}
 \partial^{\mu} a_{\mu} = 0 \, . 
 \end{equation}
We want to prove local well-posedness of the Cauchy problem for (\ref{2.1}),(\ref{2.2}),(\ref{2.3}) for data with minimal regularity assumptions especially for $\phi_0$ and $\phi_1$.

Chae-Chae \cite{CC} assumed $(\phi_0,\phi_1) \in H^2 \times H^1$ and proved local and even global well-posedness using energy conservation. This was improved by J. Yuan \cite{Y} to $(\phi_0,\phi_1),(a_{\mu 0},a_{\mu 1}),(N_0,N_1) \in H^s \times H^{s-1}$ with $s > \frac{3}{4}$ , who obtained a local solution $\phi,A_{\mu},N \in C^0([0,T],H^s) \cap C^1([0,T],H^{s-1})$, which is unique in a suitable subset of $X^{s,b}$-type. Using energy conservation this solution exists globally, if $s\ge 1$.

We now further lower down the regularity of the data to $(\phi_0,\phi_1) \in H^s \times H^{s-1}$,  $(a_{\mu 0},a_{\mu 1}) \in H^{2s-\frac{3}{4}-} \times H^{2s-\frac{7}{4}-}$ , $(N_0,N_1) \in H^{\frac{1}{2}} \times H^{-\frac{1}{2}}$ under the condition $ s > \frac{1}{2} $ . We obtain a local solution $\phi \in C^0([0,T],H^s) \cap C^1([0,T],H^{s-1})$, $A_{\mu} \in C^0([0,T],H^{2s-\frac{3}{4}-}) \cap C^1([0,T],H^{2s-\frac{7}{4}-}) $ , $N\in C^0([0,T],H^{\frac{1}{2}}) \cap C^1([0,T],H^{-\frac{1}{2}-})$,  which is unique in a suitable subspace of $X^{s,b}$-type.

Moreover it is easy to see that as a consequence of these results unconditional uniqueness holds for the solutions of Yuan, i.e. , for $\phi,A_{\mu},N \in C^0([0,T],H^s) \cap C^1([0,T],H^{s-1})$ and $s> \frac{3}{4}$ , especially global well-posedness for finite energy solutions (s=1).

Whereas Chae-Chae only used standard energy type estimates Yuan applied bilinear Strichartz type estimates which were given in the paper of d'Ancona, Foschi and Selberg \cite{AFS}. We also use this type of estimates but additionally take advantage of a crucial null condition of the term $A_{\mu} \partial^{\mu} \phi$ in the wave eqution for $\phi$. This was detected by Klainerman-Machedon \cite{KM} and Selberg-Tesfahun \cite{ST} for the Maxwell-Klein-Gordon equations and also by Selberg-Tesfahun \cite{ST1} for the corresponding problem for the Chern-Simons-Higgs equations. When combined with the bilinear Strichartz type estimates this leads to the improved lower bound  for the regularity for the problem at hand.

We denote the Fourier transform $\mathcal{F}$ with respect to space as well as to space and time by $\,\widehat{}$ . The operator
$\langle \nabla \rangle^{\alpha}$ is defined by $\mathcal{F}(\langle \nabla \rangle^{\alpha} f)(\xi) = \langle \xi \rangle^{\alpha} (\widehat{f})(\xi)$, where $\langle \cdot \rangle := (1+|\cdot|^2)^{\frac{1}{2}}$ . Define
$a+ := a+\epsilon$ for $\epsilon > 0$ sufficiently small, so that $a < a+ < a++$ and similarly $a-- < a- < a$, and $a-b\,-$ means $(a-b)-$.\\
Besides the Sobolev spaces $H^{k,p}$ we use the spaces $X^{s,b}_{\pm}$ of Bougain-Klainerman-Machedon type defined as the completion of ${\mathcal S}({\mathbb R}^3)$ with respect to the norm $\|u\|_{X^{s,b}_{\pm}} = \| \langle \xi \rangle^s (\tau\pm |\xi| \rangle^b \widehat{u}(\tau,\xi)\|_{L^2_{\tau \xi}}$ and similarly the wave-Sobolev spaces $X^{s,b}$ with norm $\|u\|_{X^{s,b}} = \|\langle \xi \rangle^s \langle |\tau| - |\xi| \rangle^b \widehat{u}(\tau,\xi)\|_{L^2_{\tau \xi}}$ . The spaces of restrictions to $[0,T] \times {\mathbb R}^2$ with induced norms are $X^{s,b}_{\pm}[0,T]$ and $X^{s,b}[0,T]$. Remark that $\|u\|_{X^{s,b}_{\pm}} \le \|u\|_{X^{s,b}}$ for $b \le 0$ and the reverse estimate for $b \ge 0$.

We now formulate our main results. One easily checks that a solution of (\ref{2.1}),(\ref{2.2}),(\ref{2.3}) (with (\ref{*}),(\ref{****})) under the Lorenz condition
\begin{equation}
\label{1.4*}
\partial^{\mu} A_{\mu} = 0
\end{equation}
also fulfills the following system
\begin{align}
\label{1.1}
&(\square +1)A_0 = -\kappa F_{12} -2e Im(\phi \overline{D_0 \phi}) +A_0 \\
\label{1.2}
&(\square +1)A_i = -\kappa \epsilon^{ij} F_{0j} -2e Im(\phi \overline{D_i \phi}) +A_i \\
\label{1.3}
&(\square +1)\phi =  2ie A_0 \partial_0 \phi -2ie A^j \partial_j \phi -e^2 A^j A_j \phi +e^2 A_0^2 \phi - U_{\overline{\phi}}(|\phi|^2,N) + \phi \\
\label{1.4}
&(\square +1)N = - U_N(|\phi|^2,N) + N \, .
\end{align}
Here we replaced $\square$ by $\square +1$ by adding a linear terms on both sides of the equations in oder to avoid the operator $(-\Delta)^{-\frac{1}{2}}$, which is unpleasent especially  in two dimensions.

Defining
\begin{align*}
&A_{\mu,\pm} = \frac{1}{2}(A_{\mu} \pm i^{-1} \langle \nabla \rangle^{-1} \partial_t A_{\mu}) &\Leftrightarrow &A_{\mu}=A_{\mu,+} + A_{\mu,-} \,,\, \partial_t A_{\mu}=i\langle \nabla \rangle(A_{\mu,+}-A_{\mu,-}) \\
&\phi_{\pm} = \frac{1}{2}(\phi \pm i^{-1} \langle \nabla \rangle^{-1} \partial_t \phi) &\Leftrightarrow &\phi = \phi_+ + \phi_- \, , \, \partial_t \phi = i \langle \nabla \rangle(\phi_+ - \phi_-) \\
&N_{\pm} = \frac{1}{2}(N \pm i^{-1} \langle \nabla \rangle^{-1} \partial_t N) &\Leftrightarrow &N = N_+ + N_- \, , \, \partial_t N= i \langle \nabla \rangle(N_+ - N_-)
\end{align*}
we obtain the equivalent system
\begin{align}
\label{1.1'}
(i \partial_t \pm \langle \nabla \rangle) A_{0,\pm} & = \pm 2^{-1} \langle \nabla \rangle^{-1} ( \, {\mbox R.H.S.\, of} \,(\ref{1.1})) \\
\label{1.2'}
(i \partial_t \pm \langle \nabla \rangle) A_{j,\pm} & = \pm 2^{-1} \langle \nabla \rangle^{-1} ( \, {\mbox R.H.S.\, of} \,(\ref{1.2})) \\
\label{1.3'}
(i \partial_t \pm \langle \nabla \rangle) \phi_{\pm} & = \pm 2^{-1} \langle \nabla \rangle^{-1} ( \, {\mbox R.H.S.\, of} \,(\ref{1.3})) \\
\label{1.4'}
(i \partial_t \pm \langle \nabla \rangle) N_{\pm} & = \pm 2^{-1} \langle \nabla \rangle^{-1} ( \, {\mbox R.H.S.\, of} \,(\ref{1.4})) 
\end{align}

We obtain the following result:                                  
\begin{theorem}
\label{Theorem1}
Assume $ 1 > s > \frac{1}{2}$ and 
\begin{align*}
 &\phi_0 \in H^s \, , \, \phi_1 \in H^{s-1} \, , \, a_{\mu 0} \in H^{2s-\frac{3}{4}-} \, , \, a_{\mu 1} \in H^{2s-\frac{7}{4}-}  \, (\mu =0,1,2) \, ,\\ &n_0 \in H^{\frac{1}{2}} \, , \, n_1 \in H^{-\frac{1}{2}} \, . 
\end{align*}
There exists  $T>0$ such that
the system (\ref{1.1}),(\ref{1.2}),(\ref{1.3}),(\ref{1.4}) with (\ref{*}),(\ref{****}) and Cauchy conditions
$$A_{\mu}(0) = a_{\mu 0} \, , \, \partial_tA_{\mu}(0) = a_{\mu 1} \, , \, \phi_0 = \phi_0 \, , \, \partial_t \phi(0) = \phi_1 \, , \, N(0)=N_0 \, , \, \partial_t N(0) = N_1 $$
has a unique local solution 
\begin{align*}
& \phi \in X_+^{s,\frac{1}{2}+}[0,T] + X_-^{s,\frac{1}{2}+}[0,T] \\& A_{\mu} \in X_+^{2s-\frac{3}{4}-,2s-\frac{1}{4}--}[0,T] +X_-^{2s-\frac{3}{4}-,2s-\frac{1}{4}--}[0,T] \quad (\,{\mbox for}\, \frac{1}{2} < s \le \frac{5}{8}\,) \\
& A_{\mu} \in X_+^{2s-\frac{3}{4}-,1-}[0,T] +X_-^{2s-\frac{3}{4}-,1-}[0,T] \quad({\mbox for}\, \frac{5}{8} < s \le 1 \,) \\ 
&N \in X^{\frac{1}{2},\frac{1}{2}+}_+[0,T] + X_-^{\frac{1}{2},\frac{1}{2}+}[0,T] \, .
\end{align*} 
It has the properties
\begin{align*}
& \phi \in C^0([0,T],H^s(\mathbb{R}^2)) \cap C^1([0,T],H^{s-1}(\mathbb{R}^2)) \\
& A_{\mu} \in C^0([0,T],H^{2s-\frac{3}{4}-}(\mathbb{R}^2)) \cap C^1([0,T],H^{2s-\frac{7}{4}-}(\mathbb{R}^2)) \\
& N \in C^0([0,T],H^{\frac{1}{2}}(\mathbb{R}^2)) \cap C^1([0,T],H^{-\frac{1}{2}}(\mathbb{R}^2)) \, .
\end{align*}
\end{theorem}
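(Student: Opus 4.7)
The plan is to solve the equivalent first-order system (\ref{1.1'})-(\ref{1.4'}) via Picard iteration in a product of $X^{s,b}_{\pm}[0,T]$-type spaces, namely $X^{s,1/2+}_\pm[0,T]$ for $\phi_\pm$, $X^{1/2,1/2+}_\pm[0,T]$ for $N_\pm$, and $X^{2s-3/4-,b}_\pm[0,T]$ for $A_{\mu,\pm}$, where $b = 2s-1/4--$ in the regime $1/2 < s \le 5/8$ and $b = 1-$ for $5/8 < s \le 1$. The standard Duhamel theory for the half-wave operators $i\partial_t \pm \langle \nabla \rangle$ bounds the homogeneous flow of data in $H^\sigma$ by their $H^\sigma$-norm in $X^{\sigma,b}_\pm[0,T]$, and bounds the inhomogeneous part with a gain $T^\delta$ for some $\delta>0$ provided the source is placed in $X^{\sigma,b-1+}_\pm[0,T]$. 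The problem thus reduces to a collection of multilinear estimates for the nonlinear terms on the right-hand sides of (\ref{1.1})-(\ref{1.4}), each factor being decomposed into its $\pm$-components and estimated separately.

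The crucial new ingredient is the null structure of the term $A^\mu \partial_\mu \phi$ in (\ref{1.3}) under the Lorenz gauge (\ref{1.4*}), as first observed by Klainerman-Machedon \cite{KM} and worked out in a Klein-Gordon-type setting by Selberg-Tesfahun \cite{ST},\cite{ST1}. One writes $A^\mu \partial_\mu \phi = A^\mu \partial_\mu \phi + \phi\, \partial^\mu A_\mu$, where the added term vanishes by the gauge condition, and reformulates the result, after a symmetrization exploiting the $\pm$-decomposition, as a linear combination of standard $Q_{\mu\nu}$-type null forms. Their symbols vanish on the characteristic cone and provide a decisive gain of roughly a half-derivative, leading to a bilinear bound of the form
\[ \| A^\mu \partial_\mu \phi \|_{X^{s-1,-1/2+}_\pm} \lesssim \| A \|_{X^{2s-3/4-,b}_\pm}\, \| \phi \|_{X^{s,1/2+}_\pm}. \]
Without the null cancellation the Sobolev regularity $2s-3/4-$ of $A$ would be insufficient to close the $\phi$-equation at $s > 1/2$.

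The remaining nonlinearities are handled by the bilinear wave-Sobolev product estimates in $2+1$ dimensions provided by d'Ancona-Foschi-Selberg \cite{AFS}. The quadratic current $\phi\,\overline{D_\mu \phi}$ driving the $A$-equations is placed in a suitable $X^{2s-7/4-,b-1+}_\pm$; the sharp range of available exponents in that estimate is precisely what forces the bifurcation between $b = 2s-1/4--$ and $b = 1-$ at the threshold $s = 5/8$. The cubic terms $A^j A_j \phi$, $A_0^2 \phi$, $N\phi$, $N^2\phi$, $|\phi|^2\phi$, $|\phi|^2 N$ and their analogues, together with the nonlinearities in (\ref{1.4}), are controlled by iterating the bilinear estimates with the Klein-Gordon Strichartz inequality and Sobolev embedding in two space dimensions; the $N$-equation is essentially decoupled at the linear level and offers the least resistance.

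The main obstacle will be establishing the null form estimate together with the sharp bilinear bound for $\phi\,\overline{D_\mu \phi}$ simultaneously at $s$ close to $1/2$, within the rather tight $2+1$-dimensional geometry where one has little room to spare in the Sobolev and Bourgain exponents. Once these multilinear estimates are in place, the contraction mapping yields local existence and uniqueness of $\phi_\pm, A_{\mu,\pm}, N_\pm$ in the product space described above. Persistence in $C^0([0,T],H^\sigma) \cap C^1([0,T],H^{\sigma-1})$ for $\phi$, $A_\mu$, $N$ then follows from the standard embedding $X^{\sigma,1/2+}_\pm[0,T] \hookrightarrow C^0_tH^\sigma_x$ combined with the relations $\partial_t\phi = i\langle\nabla\rangle(\phi_+-\phi_-)$ etc., while the reduction from the original Euler-Lagrange system to (\ref{1.1})-(\ref{1.4}) is automatic from the Lorenz condition and the Gauss constraint on the data.
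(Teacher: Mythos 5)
Your overall strategy coincides with the paper's: Duhamel/contraction in the spaces $X^{s,\frac{1}{2}+}_\pm$, $X^{2s-\frac{3}{4}-,b}_\pm$ with $b=2s-\frac{1}{4}--$ or $1-$, and $X^{\frac{1}{2},\frac{1}{2}+}_\pm$, reduction to multilinear estimates, the d'Ancona--Foschi--Selberg product theorem for the current and cubic terms, and the Lorenz-gauge null structure of $A^\mu\partial_\mu\phi$ as the decisive ingredient. The persistence and uniqueness remarks at the end are also as in the paper.

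However, the one concrete step you give for extracting the null structure is not the one that works. Writing $A^\mu\partial_\mu\phi = A^\mu\partial_\mu\phi + \phi\,\partial^\mu A_\mu$ and invoking the gauge condition only says $A^\mu\partial_\mu\phi = \partial_\mu(A^\mu\phi)$; this moves a derivative outside the product but does not produce a symbol vanishing on the cone, and no amount of $\pm$-symmetrization turns it into a combination of $Q_{\mu\nu}$ forms. What the paper (following Selberg--Tesfahun) actually does is split the spatial potential as $A_j = A_j^{df}+A_j^{cf}+\langle\nabla\rangle^{-2}A_j$ with modified Riesz transforms $R_j=\langle\nabla\rangle^{-1}\partial_j$. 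The divergence-free part paired with $\partial^j\phi$ is exactly the null form $Q^{12}$ with symbol controlled by the angle $\Theta(\pm_1\xi,\pm_2\eta)$; the curl-free part is rewritten via the Lorenz condition as $A_j^{cf}=-iR_j(A_{0,+}-A_{0,-})$ and is then \emph{combined with} $A_0\partial_t\phi$ to form a second null-type bilinear form $\tilde Q_{\pm_1\pm_2}$ whose symbol is bounded by $\Theta(\pm_1\xi,\pm_2\eta)+\langle\xi\rangle^{-1}+\langle\eta\rangle^{-1}$; the smooth remainder $\langle\nabla\rangle^{-2}A_j\,\partial^j\phi$ is estimated directly by Sobolev. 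The angle is then converted into powers of the modulation weights via the lemma from Selberg--Tesfahun, which is what generates the specific list of bilinear estimates (\ref{1})--(\ref{8}) that must be checked against Theorem \ref{Theorem3} and is where the thresholds $s>\frac12$ and $s=\frac58$ actually arise. Without this decomposition your claimed bilinear bound for $A^\mu\partial_\mu\phi$ at regularity $2s-\frac34-$ for $A$ does not close. A minor further point: the paper handles the cubic terms purely with Theorem \ref{Theorem3} and Sobolev embedding; no Strichartz estimates are invoked.
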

This result is proven in section 2. \\
{\bf Remark:} The system (\ref{1.1})-(\ref{1.4}) is not scaling invariant, but ignoring lower order terms we can write it schematically as
\begin{align*}
\Box{A} & = \phi \nabla \phi + A \phi^2 \\
\Box{\phi} & = A \nabla \phi +A^2 \phi + N \phi^2 + \phi^3 \\
\Box{N} & = N \phi^2 \, .
\end{align*}
This system is invariant under the scaling 
$$ A(x,t) \longmapsto \lambda A(\lambda x,\lambda t) \, , \, \phi(x,t) \longmapsto \lambda \phi(\lambda x,\lambda t) \, , \,N(x,t) \longmapsto \lambda N(\lambda x,\lambda t) \, .$$ 
Thus the critical data space with respect to scaling for $A(0)$ , $\phi(0)$ , $N(0)$ (in dimension 2) is $L^2$, so that there remains a gap between this space and our minimal assumptions in Theorem \ref{Theorem1}, namely $\phi(0) \in H^{\frac{1}{2}+}$ , $A(0) \in H^{\frac{1}{4}+}$ , $N(0) \in H^{\frac{1}{2}}$.

In section 3 we show the following theorem and its corollary as a consequence of Theorem \ref{Theorem1}.
 \begin{theorem}
\label{Theorem2}
Assume $ 1 > s > \frac{1}{2}$. Moreover assume that the data satisfy the assumptions of Theorem \ref{Theorem1} and also
\begin{equation}
\label{GL}
\Delta a_{00} - \partial_1 a_{11} - \partial_2 a_{21} -\kappa(\partial_1 a_{20} - \partial_2 a_{10}) - 2e Im(\phi_0 \overline{\phi}_1) +2e^2 a_{00}|\phi_0|^2 = 0 
\end{equation} 
and
\begin{equation}
\label{21}
 \partial^{\mu} a_{\mu} = 0 \, . 
 \end{equation}
The solution of Theorem \ref{Theorem1}  is the unique solution of the Cauchy problem
for the system
\begin{align}
\label{22}
& \partial_{\lambda} F^{\lambda \rho} + \frac{\kappa}{2} \epsilon^{\mu \nu \rho} F_{\mu \nu} +  2e Im( \phi  \overline{D^{\rho} \phi})  = 0\\
\label{23}
&D_{\mu} D^{\mu} \phi  + U_{\overline{\phi}}(|\phi|^2,N) = 0 \\
\label{24}
& \partial_{\mu} \partial^{\mu} N + U_N (|\phi|^2,N) = 0 \, ,
\end{align}
where
\begin{align}
\label{25}
U_{\overline{\phi}}(|\phi|^2,N) &= (e|\phi|^2+\kappa N)\phi + e^2(N+\frac{ev^2}{\kappa})^2 \phi \\
\label{25''}
 U_N(|\phi|^2,N) &= \kappa (e |\phi|^2 + \kappa N) +2e^2(N+\frac{ev^2}{\kappa}) |\phi|^2 
\end{align}
with initial conditions 
\begin{align}
\label{25'}
A_{\mu}(0) = a_{\mu 0} \, , \, \partial_tA_{\mu}(0) = a_{\mu 1} \, , \, \phi_0 = \phi_0 \, , \, \partial_t \phi(0) = \phi_1 \, , \, N(0)=N_0 \, , \, \partial_t N(0) = N_1 \, ,
\end{align}
 which fulfills the Lorenz condition $ \partial^{\mu} A_{\mu} = 0 \, . $
\end{theorem}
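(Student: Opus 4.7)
The plan is to reduce Theorem~\ref{Theorem2} to the preservation of the Lorenz gauge $L := \partial^\mu A_\mu = 0$ along the flow defined by (\ref{1.1})--(\ref{1.4}). Once this is established, a direct algebraic manipulation---reversing the passage from (\ref{1.1*})--(\ref{1.3*}) together with (\ref{2.2})--(\ref{2.3}) to the modified wave system---shows that the solution provided by Theorem~\ref{Theorem1} also solves (\ref{22})--(\ref{24}). Uniqueness follows because any hypothetical solution of (\ref{22})--(\ref{24}) in the same regularity class that also satisfies the Lorenz condition automatically solves (\ref{1.1})--(\ref{1.4}), which has a unique solution by Theorem~\ref{Theorem1}.

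To control $L$, I would derive a wave equation by applying $\partial_t$ to (\ref{1.1}) and $\partial_j$ to (\ref{1.2}) and subtracting. Two cancellations appear. The Chern-Simons contributions vanish thanks to the Bianchi-type identity $\partial_t F_{12} = \partial_1 F_{02} - \partial_2 F_{01}$. The current terms collapse to $-\partial_\mu J^\mu$ with $J^\mu := 2e\,\mathrm{Im}(\phi\,\overline{D^\mu \phi})$, and the standard computation gives $\partial_\mu J^\mu = 2e\,\mathrm{Im}(\phi\,\overline{D_\mu D^\mu \phi})$. On the other hand, the identity
\[
  D_\mu D^\mu \phi = \Box \phi - ieL\phi - 2ieA^\mu \partial_\mu \phi - e^2 A_\mu A^\mu \phi
\]
combined with equation (\ref{1.3}) yields $D_\mu D^\mu \phi + U_{\bar\phi} = -ieL\phi$; since $U_{\bar\phi}$ is a real multiple of $\phi$, we have $\mathrm{Im}(\phi\,\overline{U_{\bar\phi}}) = 0$, and therefore $\partial_\mu J^\mu = 2e^2|\phi|^2 L$. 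Altogether,
\[
  \Box L + 2e^2|\phi|^2 L = 0.
\]
The Cauchy data vanish: $L(0) = a_{01} - \partial_j a_{j0}$ is zero by (\ref{21}), and evaluating (\ref{1.1}) at $t=0$ to express $\partial_t^2 A_0(0)$ shows that $\partial_t L(0) = \partial_t^2 A_0(0) - \partial_j a_{j1}$ coincides up to sign with the left-hand side of the Gauss law (\ref{GL}), hence also vanishes. Uniqueness for the linear wave equation with the potential $2e^2|\phi|^2$ then gives $L \equiv 0$.

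The main technical obstacle is justifying the formal manipulations at the low regularity level of Theorem~\ref{Theorem1}. There $L$ lies in $H^{2s-\frac{7}{4}-}$, which is a negative Sobolev space in the relevant range $\frac{1}{2} < s < \frac{7}{8}$, so in particular the product $|\phi|^2 L$ must be defined distributionally. The standard remedy is approximation: mollify the initial data to obtain smooth data, restore the constraints (\ref{GL}) and (\ref{21}) by solving the appropriate elliptic equations for $a_{00}$ and $a_{01}$, apply Theorem~\ref{Theorem1} to obtain classical approximating solutions for which the derivation above is rigorous and $L_n \equiv 0$, and pass to the limit via the continuous dependence on data implicit in the contraction argument underlying Theorem~\ref{Theorem1}.
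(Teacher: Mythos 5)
Your proposal is correct and follows essentially the same route as the paper: the paper introduces $W=\partial^\mu A_\mu$ and $V$ (the Gauss-law expression), shows $\partial_t W=V$, $\partial_t V=\Delta W+2e^2|\phi|^2W$, hence a linear wave equation for $W$ with potential $2e^2|\phi|^2$ and vanishing Cauchy data (by (\ref{GL}), (\ref{21})), using exactly your cancellations ($\mathrm{Im}(\phi\,\overline{U_{\overline{\phi}}})=0$ and the identity relating $D_\mu D^\mu\phi$ to $\Box\phi$ and $W$), and likewise justifies the computation by regularization, persistence of regularity and continuous dependence. The only cosmetic differences are that the paper keeps the first-order system in $(W,V)$ rather than the second-order equation for $L$, and you spell out the (glossed-over) need to restore the constraints after mollification.
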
 
\begin{Cor}
\label{Corollary1}
Let $s> \frac{3}{4}$ , $T>0$ and
$\phi_0,a_{\mu 0},n_0 \in H^s$ , $a_{\mu 1},n_1 \in H^{s-1}$ satisfying (\ref{GL}),(\ref{21}). Then the solution of (\ref{22}),(\ref{23}),(\ref{24}) with initial conditions (\ref{25'}) under the Lorenz condition $\partial^{\mu} A_{\mu}=0$ is (unconditionally) unique in the space $\phi,A_{\mu},N \in C^0([0,T],H^s) \cap C^1([0,T],H^{s-1})$. Combined with the existence result of Yuan \cite{Y} we obtain local well-posedness and in energy space and above ($s \ge 1$) global well-posedness.
\end{Cor}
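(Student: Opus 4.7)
The plan is to reduce Corollary \ref{Corollary1} to the uniqueness statement of Theorem \ref{Theorem2} at a lower regularity parameter. Fix data as in the corollary with $s > \tfrac{3}{4}$, and choose $s' = \tfrac{1}{2}+\epsilon$ for $\epsilon > 0$ sufficiently small. Because $s > \tfrac{3}{4}$, the embeddings
\begin{equation*}
H^s \hookrightarrow H^{s'},\ H^s \hookrightarrow H^{2s'-\frac{3}{4}-},\ H^{s-1} \hookrightarrow H^{2s'-\frac{7}{4}-},\ H^s \hookrightarrow H^{\frac{1}{2}},\ H^{s-1} \hookrightarrow H^{-\frac{1}{2}}
\end{equation*}
all hold, so the given data also satisfy the hypotheses of Theorems \ref{Theorem1} and \ref{Theorem2} with the parameter $s'$ in place of $s$. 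These theorems then produce a solution $(\phi^\star, A_\mu^\star, N^\star)$ of (\ref{22})--(\ref{24}) under the Lorenz gauge on some interval $[0, T^\star]$, living in the $X_\pm^{s',b'}$-type spaces of Theorem \ref{Theorem1}, which is unique within that class.

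Now let $(\phi, A_\mu, N) \in C^0([0,T], H^s) \cap C^1([0,T], H^{s-1})$ be any other solution of (\ref{22})--(\ref{24}) under the Lorenz gauge with the same data. If we can verify that on some subinterval $[0, T'] \subset [0, \min(T, T^\star)]$ this solution also belongs to the $X_\pm^{s',b'}$-class of Theorem \ref{Theorem1}, then Theorem \ref{Theorem2} forces $(\phi, A_\mu, N) = (\phi^\star, A_\mu^\star, N^\star)$ on $[0, T']$, and a routine open--closed continuation argument extends the equality to $[0, \min(T, T^\star)]$. This establishes unconditional uniqueness; combined with Yuan's existence result it yields local well-posedness, and the global statement for $s \ge 1$ then follows from the formally conserved energy $E(t)$ used by Chae--Chae \cite{CC}, which controls the $H^1$-norm and prevents finite-time blow-up.

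The only substantive step is thus the embedding into $X_\pm^{s',b'}$. After extending everything to $\mathbb{R}_t$ via a smooth time cutoff, one writes the diagonalized system (\ref{1.1'})--(\ref{1.4'}) in Duhamel form: the data contribute a cut-off free half-wave which automatically lies in $X_\pm^{s',b'}$ for every $b'$, while each Duhamel integral is controlled provided the nonlinearities $A^\mu \partial_\mu \phi$, $A^\mu A_\mu \phi$, $A^\mu \phi^2$, $N\phi^2$, $\phi^3$, $N^2\phi$ and their lower-order analogues can be estimated in the dual norm $X_\pm^{s'-1, b'-1}$. Since $s > \tfrac{3}{4}$ is comfortably above the threshold $s > \tfrac{1}{2}$ of Theorem \ref{Theorem1}, these nonlinear estimates follow from ordinary product and bilinear Strichartz-type bounds (in the spirit of \cite{AFS}) applied to factors in $C^0([0,T], H^s) \cap C^1([0,T], H^{s-1})$, without having to invoke the null-form structure that was essential for reaching $s > \tfrac{1}{2}$; choosing $T'$ small supplies the small factor needed to close the argument. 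Verifying these product estimates at the $s > \tfrac{3}{4}$ regularity, and thereby placing an arbitrary $C^0 H^s \cap C^1 H^{s-1}$ solution into $X_\pm^{s',b'}$, is the principal obstacle, after which everything else is bookkeeping.
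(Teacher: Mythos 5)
Your overall architecture is the same as the paper's: embed an arbitrary $C^0([0,T],H^s)\cap C^1([0,T],H^{s-1})$ solution (with $s>\frac34$) into the uniqueness class of Theorems \ref{Theorem1}/\ref{Theorem2} at the lower regularity $s'=\frac12+$, then invoke that uniqueness. The data-space embeddings you check are correct, and you rightly identify the placement of the given solution into an $X^{s',b'}_\pm$-space as the only substantive step. However, the way you propose to carry out that step has a genuine gap: you want to bound the nonlinearities in $X^{s'-1,b'-1}_\pm$ by ``bilinear Strichartz-type bounds in the spirit of \cite{AFS} applied to factors in $C^0([0,T],H^s)\cap C^1([0,T],H^{s-1})$.'' Those estimates (Theorem \ref{Theorem3}) require the \emph{inputs} to lie in $X^{s,b}$-spaces with $b>0$, which is precisely the information you do not yet have about the given solution; as stated the argument is circular. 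The paper avoids this by working with $b=1$: since $\|u\|_{X^{\sigma,1}_\pm[0,T]}$ is read off directly from $\|u\|_{L^2_tH^\sigma}$ and $\|(i\partial_t\pm\langle\nabla\rangle)u\|_{L^2_tH^\sigma}$, it suffices to show the right-hand sides of (\ref{1.1'})--(\ref{1.4'}) lie in $L^2([0,T],H^{\frac12+})$, and this follows from the purely spatial Sobolev multiplication law applied pointwise in time (e.g. $H^s\cdot H^{s-1}\subset H^{-\frac12+}$ in two dimensions exactly when $s>\frac34$), with a factor $T^{\frac12}$ from H\"older in $t$. Your Duhamel variant can be repaired the same way, by using the trivial embedding $X^{\sigma,0}=L^2_tH^\sigma\hookrightarrow X^{\sigma,b'-1}$ for $b'\le 1$ rather than any space-time bilinear estimate, but this is the missing idea, not a bookkeeping detail.

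Two smaller points: no smallness of $T'$ is needed to ``close'' anything here, since you are not running a fixed point but merely verifying membership of a known solution in a function space (the iteration over subintervals is only needed, as you note, to propagate uniqueness from the local existence interval of Theorem \ref{Theorem1} to all of $[0,T]$). Your attribution of the global statement for $s\ge 1$ to energy conservation agrees with the paper, which defers this to Yuan's result.
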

 
Fundamental for the proof of our theorem are the following bilinear estimates in wave-Sobolev spaces which were proven by d'Ancona, Foschi and Selberg in the two-dimensional case $n=2$ in \cite{AFS} in a more general form which include many limit cases which we do not need.
\begin{theorem}
\label{Theorem3}
Let $n=2$. The estimate
$$\|uv\|_{X^{-s_0,-b_0}} \lesssim \|u\|_{X^{s_1,b_1}} \|v\|_{X^{s_2,b_2}} $$ 
holds, provided the following conditions are satisfied:
$$b_0 + b_1 + b_2 > \frac{1}{2} \, ,\quad
 b_0 + b_1 > 0 \, ,\quad
 b_0 + b_2 > 0  \, ,\quad
 b_1 + b_2 > 0  \, ,$$
\begin{align*}
&s_0+s_1+s_2 > \frac{3}{2} -(b_0+b_1+b_2) \\
\nonumber
&s_0+s_1+s_2 > 1 -\min(b_0+b_1,b_0+b_2,b_1+b_2) \\
\nonumber
&s_0+s_1+s_2 > \frac{1}{2} - \min(b_0,b_1,b_2) \\
\nonumber
&s_0+s_1+s_2 > \frac{3}{4} \\
\end{align*}
\begin{align*}
 &(s_0 + b_0) +2s_1 + 2s_2 > 1 \\
\nonumber
&2s_0+(s_1+b_1)+2s_2 > 1 \\
\nonumber
&2s_0+2s_1+(s_2+b_2) > 1 \\
\nonumber
&s_1 + s_2 \ge \max(0,-b_0) \\
\nonumber
&s_0 + s_2 \ge \max(0,-b_1) \\
\nonumber
&s_0 + s_1 \ge \max(0,-b_2)   \, .
\end{align*}
\end{theorem}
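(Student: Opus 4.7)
My plan is to establish the estimate by duality combined with Fourier-side analysis on the resulting trilinear form. After renaming $u$ and $v$ as $u_1,u_2$ and introducing a test function $u_0\in X^{s_0,b_0}$, the estimate is equivalent to the trilinear bound
\begin{equation*}
\left|\int_{\mathbb{R}^{1+2}} u_0 u_1 u_2 \, dx\,dt \right| \lesssim \prod_{j=0}^{2} \|u_j\|_{X^{s_j,b_j}}.
\end{equation*}
Setting $f_j(\xi,\tau) = \langle \xi\rangle^{s_j}\langle |\tau|-|\xi|\rangle^{b_j}\widehat{u}_j(\xi,\tau)$ and writing out the convolution of the Fourier transforms, the task reduces to showing
\begin{equation*}
\int\!\!\!\int_{\xi_0+\xi_1+\xi_2 = 0,\ \tau_0+\tau_1+\tau_2 = 0} \frac{f_0(\xi_0,\tau_0) f_1(\xi_1,\tau_1) f_2(\xi_2,\tau_2)}{\prod_{j=0}^{2}\langle\xi_j\rangle^{s_j}\langle |\tau_j|-|\xi_j|\rangle^{b_j}}\,d\xi\,d\tau \;\lesssim\; \prod_{j=0}^{2}\|f_j\|_{L^2_{\xi\tau}}.
\end{equation*}

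The next step is a simultaneous Littlewood-Paley decomposition in the spatial frequency scale $|\xi_j|\sim N_j$ and the modulation scale $\bigl||\tau_j|-|\xi_j|\bigr|\sim L_j$. On each dyadic block, the standard transference principle reduces the bound to a bilinear $L^2_{t,x}$ estimate for products of free waves localized at frequency $N$ and modulation $L$. In two spatial dimensions the sharp such estimate, due to Foschi-Klainerman, yields a bound of the form $N_{\min}^{1/2} L_{\min}^{1/2} L_{\med}^{1/2}$, up to logarithmic losses in the resonant configuration; the underlying geometric fact is the algebraic identity $\max_j L_j \gtrsim N_{\min}\,\theta^2$, where $\theta$ is the angle between two of the spatial frequencies, which quantifies the transversality required for bilinear gain on the light cone.

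With this input, each dyadic piece produces a factor $\prod_j N_j^{-s_j}L_j^{-b_j}$ times the bilinear Strichartz factor, and the listed conditions are precisely those ensuring absolute summability over all dyadic configurations. In the high-high-to-low regime, scaling forces $s_0+s_1+s_2 > \tfrac{3}{2}-(b_0+b_1+b_2)$; the endpoint $s_0+s_1+s_2 > \tfrac{3}{4}$ reflects the two-dimensional Strichartz endpoint for free waves; the mixed conditions of the form $(s_0+b_0)+2s_1+2s_2 > 1$ handle the cases in which one factor is concentrated far from the cone while the other two are nearly free; and the inequalities $s_1+s_2\ge\max(0,-b_0)$ etc.\ rule out logarithmic divergences when two indices vanish simultaneously. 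The main obstacle is the combinatorial bookkeeping of the case analysis, since each dyadic regime is controlled by a different binding inequality, and one must verify no additional restriction is forced by any case. This task was carried out in complete generality by d'Ancona-Foschi-Selberg \cite{AFS}, from which we quote the statement; in the subsequent sections we shall apply only the specific instances needed to handle the nonlinearities in (\ref{1.1'})-(\ref{1.4'}).
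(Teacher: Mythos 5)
Your proposal is correct and matches the paper's treatment: the paper itself offers no proof of this theorem but simply quotes it from d'Ancona--Foschi--Selberg \cite{AFS}, and your concluding deferral to that reference is exactly what the author does. Your preliminary sketch (duality, dyadic decomposition in frequency and modulation, reduction to bilinear $L^2$ estimates for free waves, and the relation between the angle and the hyperbolic weights) is a fair outline of the strategy actually used in \cite{AFS}, so it adds context without changing the substance.
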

{\bf Acknowledgment:} The author thanks the referee for valuable comments and suggestions which helped to improve the paper.

\section{Proof of Theorem \ref{Theorem1}}
An application of the contraction mapping is by well-known arguments reduced to suitable multilinear estimates of the right hand sides of (\ref{2.1}),(\ref{2.2}) and (\ref{2.3}). 
For (\ref{1.1}) e.g. we make use of the following well-known estimate:
\begin{align*}
\|A_{0,\pm}\|_{X^{l,b}_{\pm}[0,T]} &\lesssim \|A_{0,\pm}(0)\|_{H^l} + T^{\epsilon} \| \langle \nabla \rangle^{-1} (R.H.S. \, of \, (\ref{1.1})) \|_{X^{l,b-1+\epsilon}_{\pm}[0,T]} \\
&\lesssim \|A_{0,\pm}(0)\|_{H^l} + T^{\epsilon} \| R.H.S.\, of \,(\ref{1.1}) \|_{X^{l-1,b-1+\epsilon}_{\pm}[0,T]} \, ,
\end{align*} 
which holds for $l\in{\mathbb R}$ , $0<T\le 1$ , $\frac{1}{2} < b \le 1$ , $0 < \epsilon \le 1-b$ .

The linear terms are easily treated and therefore omitted here.

We now consider the right hand side of (\ref{1.1}):
$$ -2e Im(\phi \overline{D_0 \phi}) = -2e Im(\phi_++\phi_-)(-i)\langle \nabla \rangle(\overline{\phi}_+ - \overline{\phi}_-)) -2e^2 A_0 |\phi|^2 \, . $$
In the case $\frac{1}{2} < s < \frac{5}{8}$ we need the estimate
$$ \|\phi_{\pm 1} \langle \nabla \rangle \overline{\phi}_{\pm 2} \|_{X^{2s-\frac{7}{4}-,2s-\frac{5}{4}-}_{\pm}} \lesssim \|\phi_{\pm 1} \|_{X^{s,\frac{1}{2}+}_{\pm 1}}  \|\langle \nabla \rangle \phi_{\pm 2} \|_{X^{s-1,\frac{1}{2}+}_{\pm 2}} \, , $$
where here and in the following $\pm$ and $\pm_j$ denote independent signs.
This follows from
$$ \|uv\|_{X^{2s-\frac{7}{4}-,2s-\frac{5}{4}-}} \lesssim \|u\|_{X^{s,\frac{1}{2}+}}     \|v\|_{X^{s-1,\frac{1}{2}+}} \, , $$
which holds by Theorem \ref{Theorem3} with $s_0 = -2s+\frac{7}{4}+$ , $b_0 = \frac{5}{4}-2s+$ , $s_1=s$ , $s_2=s-1$,  $b_1=b_2= \frac{1}{2}+$ , so that $s_0+s_1+s_2 > \frac{3}{4}$ , $s_1+s_2 > 0$ for $ s > \frac{1}{2}$ and $b_0 > 0$ for $s \le \frac{5}{8}$ . \\
In the case $\frac{5}{8} < s < 1$ we have to show
$$ \|uv\|_{X^{2s-\frac{7}{4}-,0}} \lesssim \|u\|_{X^{s,\frac{1}{2}+}} \|v\|_{X^{s-1,\frac{1}{2}+}} \, ,$$
which follows similarly from Theorem \ref{Theorem3} with the same choice of the parameters as before except $b_0 =0$. This implies $s_0+b_0+2s_1+2s_2 > 1$ for $s > \frac{5}{8}$ .\\
The cubic term $2e^2 A_0 |\phi|^2$ is handled as follows. In the case $\frac{1}{2} < s \le \frac{5}{8}$ we obtain
$$\|uvw\|_{X^{2s-\frac{7}{4}-,2s-\frac{5}{4}-}} \lesssim \|u\|_{X^{2s-\frac{3}{4}-,2s-\frac{1}{4}--}} \|vw\|_{X^{0,0}}$$
and in the case $ \frac{5}{8} < s < 1$
$$\|uvw\|_{X^{2s-\frac{7}{4}-,0}} \lesssim \|u\|_{X^{2s-\frac{3}{4}-,1-}} \|vw\|_{X^{0++,0}}$$
by Theorem \ref{Theorem3}. Combining this with the estimate
$$ \|vw\|_{X^{0++,0}} \lesssim \|v\|_{X^{s,\frac{1}{2}+}} \|w\|_{X^{s,\frac{1}{2}+}} \, ,$$ 
which easily follows by Sobolev's embedding for $s > \frac{1}{2}$ we obtain the desired estimate.

The right hand side of (\ref{1.2}) can be handled in the same way.

It remains to consider the right hand side of (\ref{1.3}). We start with the most interesting quadratic term, where the null conditions come into play, namely
$ 2ie A_{\mu} \partial^{\mu} \phi  \, . $
Defining the modified Riesz transforms $R_j := \langle \nabla \rangle^{-1} \partial_j$ and splitting $A_j$ into divergence-free and curl-free parts and a smooth remainder we obtain
$$A_j = A_j^{df} + A_j^{cf} + \langle \nabla \rangle^{-2} A_j \, , $$
where
\begin{align*}
& A_1^{df} = R_2(R_1 A_2-R_2 A_1) & & A_2^{df} = R_1(R_2A_1-R_1 A_2) \\
& A_1^{cf} = -R_1(R_1A_1 + R_2 A_2) & &A_2^{cf} = -R_2(R_1A_1+R_2A_2) \, . 
\end{align*}
Now we have
\begin{equation}
\label{***}
 A_{\mu} \partial^{\mu} \phi = (A_0 \partial_t \phi - A_j^{cf} \partial^j \phi) - A_j^{df} \partial^j \phi + \langle \nabla \rangle^{-2} A_j \partial^j \phi \, . 
 \end{equation}
The first two terms on the right hand side are of null form type, where for the first term we have to use the Lorenz condition (these arguments go back to Selberg-Tesfahun). \\
We calculate
\begin{align*}
A_j^{df} \partial^j \phi & = R_2(R_1A_2 - R_2 A_1) R_1 \langle \nabla \rangle \phi -R_1(R_1A_2-R_2A_1) R_2 \langle \nabla \rangle \phi \\
& = \sum_{\pm_1,\pm_2} Q^{12}_{\pm 1,\pm 2} ((R_1 A_{2,\pm 1} - R_2 A_{1,\pm 1}), \langle \nabla \rangle \phi_{\pm 2}) \, ,
\end{align*}
where
$$ Q^{12}_{\pm 1,\pm 2} (u,v) := R_2 (\pm_1 u) R_1 (\pm_2 v) - R_1(\pm_1 u) R_2 (\pm_2 v) $$
is the standard null form with Fourier symbol
$$ \sigma_1(\pm_1 \xi, \pm_2 \eta) := \frac{(\pm_1 \xi_2)(\pm_2 \eta_1) - (\pm_1 \xi_1)(\pm_2 \eta_2)}{\langle \xi \rangle \langle \eta \rangle} \, , $$
which can be estimated by
\begin{equation}
\label{sigma1}
|\sigma_1(\pm_1 \xi, \pm_2 \eta)| \lesssim \frac{|\xi||\eta|}{\langle \xi \rangle \langle \eta \rangle}  \Theta(\pm_1 \xi,\pm_2\eta) \le \Theta(\pm_1 \xi,\pm_2\eta) \, , 
\end{equation}
where $\Theta(\xi,\eta)$ denotes the angle between two vectors $\xi,\eta \in {\mathbb R}^2$.\\
Next the Lorenz condition gives
$$R_1 A_1 + R_2 A_2 = \langle \nabla \rangle^{-1}(\partial_1 A_1 + \partial_2 A_2) =
 \langle \nabla \rangle^{-1}\partial_t A_0 \, , $$
so that
$$A_j^{cf} = -R_j(R_1 A_1 + R_2 A_2) = - \langle \nabla \rangle^{-1}R_j \partial_t A_0 = -iR_j(A_{0,+}-A_{0,-}) \, . $$
Thus
\begin{align*}
A_0 \partial_t \phi - A_j^{cf}\partial^j \phi & = (A_{0,+}+A_{0,-})i \langle \nabla \rangle (\phi_+ - \phi_-) -i R_j(A_{0,+}-A_{0,-})\partial^j(\phi_+ + \phi_-) \\
& = i \sum_{\pm_1,\pm_2}(A_{0,\pm 1} \langle \nabla \rangle(\pm_2 \phi_{\pm 2}) - 
R_j(\pm_1 A_{0,\pm 1}) \partial^j \phi_{\pm 2}) \\ 
& = i  \sum_{\pm_1,\pm_2} (A_{0,\pm 1} (\pm_2 \langle \nabla \rangle \phi_{\pm 2})-R_j (\pm_1 A_{0,\pm 1}) R^j \langle \nabla \rangle \phi_{\pm 2})) \\
& =i \sum_{\pm_1,\pm_2} \tilde{Q}_{\pm 1,\pm 2} (A_{0,\pm 1} , \langle \nabla \rangle \phi_{\pm 2}) \, ,
\end{align*}
where
$$ \tilde{Q}_{\pm 1,\pm 2} (u,v) := u (\pm_2 v) - (\pm_1 R_j u)R^j v $$
has Fourier symbol
$$ \sigma_2(\pm_1 \xi,\pm_2 \eta) := (\pm_2 1) - \frac{(\pm_1 \xi) \cdot \eta}{\langle \xi \rangle \langle \eta \rangle} = \frac{ \pm_2 \langle \xi \rangle \langle \eta \rangle - (\pm_1 \xi) \cdot \eta}{\langle \xi \rangle \langle \eta \rangle} \, . $$ 
Now
\begin{align*}
& |\pm_2 \langle \xi \rangle \langle \eta \rangle - (\pm_1 \xi) \cdot \eta | = \langle \eta \rangle \big| \langle \xi \rangle - \frac{(\pm_1 \xi) \cdot (\pm_2 \eta)}{\langle \eta \rangle} \big| \\
& \le |\eta|\big(\big| \langle \xi \rangle - \frac{(\pm_1 \xi) \cdot (\pm_2 \eta)}{|\eta|} \big| + \big| (\pm_1 \xi) \cdot (\pm_2 \eta) \big( \frac{1}{|\eta|} - \frac{1}{\langle \eta \rangle} \big) \big| \big)\\
& \le |\eta|\big( (\langle \xi \rangle - |\xi|) + |\xi|\big( 1 - \frac{(\pm_1 \xi) \cdot (\pm_2 \eta)}{|\eta| |\xi|} \big) \big) + \big|(\pm_1 \xi)(\pm_2 \eta) \big(\frac{1}{|\eta|} - \frac{1}{\langle \eta \rangle} \big) \big| \\
& \lesssim |\eta| \big( 1 + |\xi|  \Theta(\pm_1 \xi,\pm_2 \eta) + \frac{|\xi|}{\langle \eta \rangle} (\langle \eta \rangle - |\eta|) \big) \\
& \lesssim \langle \eta \rangle + \langle \eta \rangle \langle \xi \rangle \Theta(\pm_1 \xi,\pm_2 \eta) + \langle \xi \rangle \, ,
\end{align*}
so that 
\begin{equation}
\label{sigma2}
 |\sigma_2(\pm_1 \xi,\pm_2 \eta)| \lesssim \Theta(\pm_1 \xi, \pm_2 \eta) + \frac{1}{\langle \xi \rangle} + \frac{1}{\langle \eta \rangle} \, . 
\end{equation}
Our aim is to prove the following estimate in the case $\frac{1}{2} < s \le \frac{5}{8}$ :
\begin{equation}
\label{A}
\|A_{\mu,\pm 1} \partial^{\mu} \phi_{\pm 2}\|_{X^{s-1,-\frac{1}{2}++}_{\pm}} \lesssim \sum_{\mu} \|A_{\mu,\pm 1} \|_{X^{2s-\frac{3}{4}-,2s-\frac{1}{4}--}_{\pm 1}} \| \langle \nabla \rangle \phi_{\pm 1} \|_{X^{s-1,\frac{1}{2}+}_{\pm 2}}
\end{equation}
and in the case $\frac{5}{8} < s < 1$:
\begin{equation}
\label{A'}
\|A_{\mu,\pm 1} \partial^{\mu} \phi_{\pm 2}\|_{X^{s-1,-\frac{1}{2}++}_{\pm}} \lesssim \sum_{\mu} \|A_{\mu,\pm 1} \|_{X^{2s-\frac{3}{4}-,1-}_{\pm 1}} \| \langle \nabla \rangle \phi_{\pm 1} \|_{X^{s-1,\frac{1}{2}+}_{\pm 2}}
\end{equation} 
We first estimate the last term on the right hand side of (\ref{***}) in the whole range $\frac{1}{2} < s < 1$ . We have the sufficient estimate
$$ \| \langle \nabla \rangle^{-2} A_{j,\pm 1} \partial^j \phi_{\pm 2}\|_{X^{s-1,-\frac{1}{2}++}_{\pm}} \lesssim \|A_{j,\pm 1} \|_{X^{2s-\frac{3}{4}-,\frac{3}{4}--}_{\pm 1}} \|\partial^j \phi_{\pm 2}\|_{X^{s-1,\frac{1}{2}+}_{\pm 2}} \, , $$
which follows from
$$\|uv\|_{X^{s-1,-\frac{1}{2}++}} \lesssim \|u\|_{X^{2s+\frac{5}{4}-,\frac{3}{4}--}} \|v\|_{X^{s-1,\frac{1}{2}+}} $$
by an application of Sobolev's embedding using $(1-s)+(2s+\frac{5}{4})+(s-1) > 1$. 
The claimed estimate for the first two terms on the right hand side of (\ref{***})
reduces to
\begin{align}
\nonumber
 \int \frac{\widehat{u}(\xi,\tau)}{\langle -\tau \pm_1 
|\xi|\rangle^{2s-\frac{1}{4}--} \langle \xi \rangle^{2s-\frac{3}{4}-}} 
\frac{\widehat{v}(\eta,\lambda)}{\langle -\lambda \pm_2 |\eta|\rangle^{\frac{1}{2}+} \langle \eta \rangle^{s-1}} \\
\label{I}
\frac{\widehat{w}(\xi + \eta,\tau + \lambda)}{\langle |\lambda + \tau| - |\xi + \eta|  \rangle^{\frac{1}{2}--} \langle \xi + \eta \rangle^{1-s}} |\sigma_j(\pm_1 \xi,\pm_2 \eta)|  d\xi d\tau d\eta d\lambda
 \\ \nonumber
\lesssim \|u\|_{L^2_{xt}}\|v\|_{L^2_{xt}}\|w\|_{L^2_{xt}} \, ,
\end{align}
for $j=1,2$ ,
where we may assume that the Fourier transforms are nonnegative. In order to estimate the symbols using (\ref{sigma1}) and (\ref{sigma2}) we use the following 
\begin{lemma} (\cite{ST}, Lemma 4.3)
\begin{equation}
\label{angle}
\Theta(\pm_1 \xi,\pm_2 \eta) \lesssim \Big(\frac{\langle -\tau \pm_1 |\xi| \rangle + \langle -\lambda \pm_2 |\eta| \rangle}{\min(\langle \xi\rangle,\langle \eta \rangle)}\Big)^{\frac{1}{2}} + \Big(\frac{ \langle |\lambda + \tau|-|\eta +\xi| \rangle}{\min(\langle \xi\rangle,\langle \eta \rangle)}\Big)^{\frac{1}{2}--}
\end{equation}
$\forall \, \xi,\eta \in{\mathbb R}^2 \, , \, \lambda,\tau \in {\mathbb R} \, .$
\end{lemma}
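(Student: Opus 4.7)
\textit{Proof sketch.} The plan is to derive \eqref{angle} by translating the angular bound into a quantitative triangle-inequality statement on $\mathbb{R}^{2}$ and then reading off the defect from the three modulations. The starting point is the elementary identity
\[
1-\cos\Theta(\pm_1\xi,\pm_2\eta) \;=\; \frac{(|\xi|+|\eta|)^{2}-|\pm_1\xi+\pm_2\eta|^{2}}{2|\xi||\eta|} \;\sim\; \frac{|\xi|+|\eta|-|\pm_1\xi+\pm_2\eta|}{\min(|\xi|,|\eta|)},
\]
where the equivalence uses $|\xi|+|\eta|+|\pm_1\xi+\pm_2\eta|\sim\max(|\xi|,|\eta|)$, combined with $\Theta\lesssim(1-\cos\Theta)^{1/2}$ on $[0,\pi]$. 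Setting $h_1:=-\tau\pm_1|\xi|$, $h_2:=-\lambda\pm_2|\eta|$, $h_0:=|\tau+\lambda|-|\xi+\eta|$, this reduces \eqref{angle} to showing that the triangle defect $\delta:=|\xi|+|\eta|-|\pm_1\xi+\pm_2\eta|$ is controlled either by $\langle h_1\rangle+\langle h_2\rangle$ or by $\langle h_0\rangle^{1--}$, since the two terms on the right of \eqref{angle} appear additively.

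I would then split into the matching-sign case ($\pm_1=\pm_2=\pm$) and the opposite-sign case ($\pm_1=-\pm_2$). In the matching-sign case one has $|\pm_1\xi+\pm_2\eta|=|\xi+\eta|$, and the defining identities $\pm_1|\xi|=\tau+h_1$, $\pm_2|\eta|=\lambda+h_2$ yield
\[
\delta \;=\; \bigl(\pm(\tau+\lambda)-|\tau+\lambda|\bigr)+h_0\pm(h_1+h_2).
\]
The parenthesised term vanishes when $\mathrm{sign}(\tau+\lambda)=\pm$, giving $\delta\lesssim\langle h_0\rangle+\langle h_1\rangle+\langle h_2\rangle$ directly; otherwise it equals $-2|\tau+\lambda|$ and, because $\delta\ge 0$, this forces $|\tau+\lambda|$ itself into the same bound. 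In the opposite-sign case $|\pm_1\xi+\pm_2\eta|=|\xi-\eta|$, and the analogous identity reads $|\xi|-|\eta|=(\tau+\lambda)+(h_1+h_2)$; combined with the parallelogram law $|\xi-\eta|^{2}+|\xi+\eta|^{2}=2(|\xi|^{2}+|\eta|^{2})$ this again controls $\delta$ by the weights, with the angle now near $\pi$ rather than $0$.

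The main obstacle, and the source of the $\tfrac{1}{2}--$ exponent on the wave-modulation term, is the absolute value $|\tau+\lambda|$ inside $h_0$. When $\mathrm{sign}(\tau+\lambda)$ opposes the half-wave signs the hyperbolic weight alone does not dominate $\delta$ linearly, and one is forced to dichotomise between $|\tau+\lambda|\le\langle h_0\rangle^{1-\epsilon}$, in which regime the wave weight controls $\delta$ but only with a loss of $\epsilon$, and $|\tau+\lambda|>\langle h_0\rangle^{1-\epsilon}$, in which regime the half-wave weights take over. Balancing the two regimes produces the arbitrarily small loss $\epsilon>0$ and yields the exponent $\tfrac{1}{2}--$ in \eqref{angle}; this dichotomy is the technical heart of the argument of Selberg--Tesfahun and is the step I expect to be the most delicate.
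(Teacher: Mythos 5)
The paper offers no proof of this statement --- it is imported verbatim as \cite{ST}, Lemma 4.3 --- so your attempt can only be judged on its own terms. Your reduction of the angle to a triangle defect via $1-\cos\Theta$ and the factorization of $(|\xi|+|\eta|)^2-|\pm_1\xi+\pm_2\eta|^2$ is correct, and your treatment of the matching-sign case, namely $\delta=(\pm(\tau+\lambda)-|\tau+\lambda|)+h_0\pm(h_1+h_2)$ combined with $\delta\ge 0$, is complete and yields the clean linear bound $\delta\le|h_0|+|h_1|+|h_2|$.

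The gap is your final paragraph, which you yourself flag as the technical heart: the dichotomy $|\tau+\lambda|\lessgtr\langle h_0\rangle^{1-\epsilon}$ is never carried out, and the assertion that ``the half-wave weights take over'' in the second regime is not justified; as written this step proves nothing. Moreover, the obstacle it is meant to overcome does not exist. In the opposite-sign case use the other factorization, $2|\xi||\eta|\,(1-\cos\Theta(\xi,-\eta))=\bigl(|\xi+\eta|-\bigl||\xi|-|\eta|\bigr|\bigr)\bigl(|\xi+\eta|+\bigl||\xi|-|\eta|\bigr|\bigr)$, whose second factor is $\sim|\xi+\eta|$; then $\bigl||\xi|-|\eta|\bigr|=|(\tau+\lambda)+(h_1+h_2)|\ge|\tau+\lambda|-|h_1|-|h_2|$ and $|\xi+\eta|=|\tau+\lambda|-h_0$ give $|\xi+\eta|-\bigl||\xi|-|\eta|\bigr|\le|h_0|+|h_1|+|h_2|$ \emph{linearly}, hence $\Theta^2\lesssim\frac{|\xi+\eta|}{|\xi||\eta|}(|h_0|+|h_1|+|h_2|)\lesssim\frac{|h_0|+|h_1|+|h_2|}{\min(|\xi|,|\eta|)}$, exactly as in the matching-sign case. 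The exponent $\frac12--$ then requires no balancing whatsoever: whenever $\langle h_0\rangle\gtrsim\min(\langle\xi\rangle,\langle\eta\rangle)$ the right-hand side of \eqref{angle} is $\gtrsim 1\ge\Theta/\pi$ and the estimate is trivial, and otherwise the relevant ratio is $\le 1$, so the power $\frac12--$ dominates the power $\frac12$. (The same trivial-case observation handles the region $\min(|\xi|,|\eta|)\lesssim 1$, which you need in order to replace $|\cdot|$ by $\langle\cdot\rangle$ and which your sketch does not address.) So the outline is salvageable, but the step you defer as delicate is precisely the one that should be deleted and replaced by the elementary argument above, not completed.
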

\noindent Thus (\ref{I}) reduces to the following estimates
\begin{align}
\label{1}
\|uv\|_{X^{s-1,0}} &\lesssim \|u\|_{X^{2s-\frac{1}{4}-,2s-\frac{1}{4}--}} \|v\|_{X^{s-1,\frac{1}{2}+}} \\
\label{2}
\|uv\|_{X^{s-1,0}} &\lesssim \|u\|_{X^{2s-\frac{3}{4}-,2s-\frac{1}{4}--}} \|v\|_{X^{s-\frac{1}{2}--,\frac{1}{2}+}} \\
\label{3}
\|uv\|_{X^{s-1,-\frac{1}{2}++}} &\lesssim \|u\|_{X^{2s-\frac{1}{4}-,2s-\frac{3}{4}--}} \|v\|_{X^{s-1,\frac{1}{2}+}} \\
\label{4}
\|uv\|_{X^{s-1,-\frac{1}{2}++}} &\lesssim \|u\|_{X^{2s-\frac{3}{4}-,2s-\frac{3}{4}--}} \|v\|_{X^{s-\frac{1}{2},\frac{1}{2}+}} \\
\label{5}
\|uv\|_{X^{s-1,-\frac{1}{2}++}} &\lesssim \|u\|_{X^{2s-\frac{1}{4}-,2s-\frac{1}{4}--}} \|v\|_{X^{s-1,0+}} \\
\label{6}
\|uv\|_{X^{s-1,-\frac{1}{2}++}} &\lesssim \|u\|_{X^{2s-\frac{3}{4}-,2s-\frac{1}{4}--}} \|v\|_{X^{s-\frac{1}{2},0+}} \\
\label{7}
\|uv\|_{X^{s-1,-\frac{1}{2}++}} &\lesssim \|u\|_{X^{2s+\frac{1}{4}-,\frac{3}{4}--}} \|v\|_{X^{s-1,\frac{1}{2}+}} \\
\label{8}
\|uv\|_{X^{s-1,-\frac{1}{2}++}} &\lesssim \|u\|_{X^{2s-\frac{3}{4}-,\frac{3}{4}--}} \|v\|_{X^{s,\frac{1}{2}+}} \, ,
\end{align} 
where the last two estimates take account of the last two terms in (\ref{sigma2}) and suffice for the whole range $\frac{1}{2} < s < 1$ . In the case $\frac{5}{8} < s < 1$ the terms of the form $\|u\|_{X^{k,2s-\frac{1}{4}--}}$ have to be replaced by $\|u\|_{X^{k,1-}}$  and $\|u\|_{X^{k,2s-\frac{3}{4}--}}$ by $\|u\|_{X^{k,-\frac{1}{2}-}}$ . All these estimates follow from Theorem \ref{Theorem3}. (\ref{7}) and (\ref{8}) are easy consequences of the Sobolev embedding theorem using $1-s+2s+\frac{1}{2}+s-1 > \frac{5}{4} > 1$. \\
We consider only (\ref{1}). The parameters in Theorem \ref{Theorem3} are: \\
$s_0=1-s$ , $b_0 =0$ , $ s_1=2s-\frac{1}{4}-$ , $b_1=2s-\frac{1}{4}--$ , $s_2=s-1$ , $b_2 = \frac{1}{2}+$ \\
so that the condition $s_0+s_1+s_2=2s-\frac{1}{4}- > \frac{3}{4}$ is fulfilled for $s>\frac{1}{2}$ and also the condition $s_0+b_0+2s_1+2s_2 = 5s - \frac{3}{2} - > 1$.
The other conditions to be fulfilled in Theorem \ref{Theorem3} can be shown to be less critical.\\
Similar elementary calculations show that (\ref{2}) - (\ref{6}) are satisfied. The corresponding estimates in the range $\frac{5}{8} < s < 1$ follow similarly. This completes the proof of (\ref{A}) and (\ref{A'}).

The remaining terms on the right hand side of (\ref{1.3}) can be handled easier in the following way for the whole range $ \frac{1}{2} < s < 1$.

Concerning $A^2_{\mu} \phi$ we show
$$ \|uvw\|_{X^{s-1,-\frac{1}{2}++}} \lesssim \|uv\|_{X^{s-\frac{3}{4},0}} \|w\|_{X^{s,\frac{1}{2}+}} \lesssim \|u\|_{X^{2s-\frac{3}{4}-,\frac{3}{4}-}} \|v\|_{X^{2s-\frac{3}{4}-,\frac{3}{4}-}} \|w\|_{X^{s,\frac{1}{2}}} $$
by two applications of Theorem \ref{Theorem3}.

For the terms $|\phi|^2 \phi$ and $N^2 \phi$ we obtain similarly
$$ \|uvw\|_{X^{s-1,-\frac{1}{2}++}} \lesssim \|uv\|_{X^{0+,0}} \|w\|_{X^{s,\frac{1}{2}+}} \le \|u\|_{X^{\frac{1}{2},\frac{1}{2}+}} \|v\|_{X^{\frac{1}{2},\frac{1}{2}+}} \|w\|_{X^{s,\frac{1}{2}+}} \, . $$

The term $N\phi$  can be handled even more easily.

Finally we have to consider the terms on the right hand side of (\ref{1.4}). The term  $N|\phi|^2$ (and similarly $|\phi|^2$) is treated by Theorem \ref{Theorem3} and Sobolev as follows:
$$\|uvw\|_{X^{-\frac{1}{2},-\frac{1}{2}++}} \lesssim \|u\|_{X^{\frac{1}{2},\frac{1}{2}+}} \|vw\|_{X^{0+,0}} \lesssim \|u\|_{X^{\frac{1}{2},\frac{1}{2}+}} \|v\|_{X^{s,\frac{1}{2}+}} \|w\|_{X^{s,\frac{1}{2}+}} \, .$$

The proof of Theorem \ref{Theorem1} is now complete.

\section{Proof of Theorem \ref{Theorem2} and Corollary \ref{Corollary1}}
\begin{proof}[Proof of Theorem \ref{Theorem2}]
In a first step we show that (\ref{1.1*}) and the Lorenz condition $\partial^{\mu} A_{\mu} =0$ are satisfied , because they hold initially, i.e. ,  (\ref{GL}) and (\ref{21}) are assumed.  Let
$$ W:= \partial_t A_0 - \partial_1 A_1 - \partial_2 A_2 \, , \, V:= \Delta A_0 - \kappa F_{12} -2e Im(\phi \overline{D^0 \phi}) - \partial_t(\partial_1 A_1 + \partial_2 A_2) \, . $$
With this notation we know that $W(0)=V(0)=0$ and want to show $W(t)=V(t)=0$ $\forall t \in [0,T]$. We claim that (W,V) is a solution of the system
\begin{equation} \label{9}
 \partial_t W= V \quad , \quad \partial_t V = \Delta W + 2e^2 |\phi|^2 W\, . 
 \end{equation}
We easily calculate using (\ref{1.1})
\begin{align}
\partial_t W &= \partial_t^2 A_0-\partial_t(\partial_1 A_1+ \partial_2 A_2) \\
\nonumber
& = \Delta A_0 - \kappa F_{12} -2e Im(\phi \overline{D^0 \phi}) - \partial_t(\partial_1 A_1 + \partial_2 A_2) = V \, .
\end{align}
Moreover, by (\ref{1.2})
\begin{align*}
\Delta W & = \partial_t A_0 -\partial_1 \Delta A_1 -\partial_2 \delta A_2 \\
& = \partial_t \Delta A_0 - \partial_1(\kappa F_{02} +2e Im(\phi \overline{D^1 \phi}) - \partial_1 \partial_t^2 A_1 \\
& \hspace{4.5em} - \partial_2(-\kappa F_{01} +2e Im(\phi \overline{D^2 \phi})) - \partial_2 \partial_t^2 A_2 \\
& = \partial_t \Delta A_0 - \partial_t( \kappa F_{12}) - 2e(\partial_1 Im(\phi \overline{D^1 \phi}) + \partial_2 Im(\phi \overline{D^2 \phi})) - \partial_t^2(\partial_1 A_1 + \partial_2 A_2) \, .
\end{align*}
Now we calculate
\begin{align*}
& \partial_1 Im(\phi \overline{D^1 \phi}) + \partial_2 Im(\phi \overline{D^2 \phi}) \\
& = Im(\phi ( \overline{\partial_1 D^1 \phi} + \overline{\partial_2 D^2 \phi})) + Im(\partial_1 \phi \overline{D^1 \phi} + \partial_2 \phi \overline{D^2 \phi}) \\
& = Im(\phi(\overline{D_1^2 \phi}) + \overline{D_2^2 \phi})) + Im(\phi(-ie A_1\overline{D^1 \phi} -ie A_2 \overline{D^2 \phi})) \\
& \quad + Im(D_1 \phi \overline{D_1 \phi} + ie A_1 \phi \overline{D^1 \phi}) + Im(D_2 \phi \overline{D_2 \phi} + ie A_2 \phi \overline{D_2 \phi)}) \\
& = Im(\phi(\overline{D_1^2 \phi} + \overline{D_2^2 \phi}))
\end{align*}
and
\begin{align*}
& D_0^2 \phi - D_1^2 \phi - D_2^2 \phi \\
& = ((\partial_0^2 - \partial_1^2 -\partial_2^2) \phi - 2ie A_0 \partial_0 \phi + 2ie A_j \partial^j \phi + e^2  A^j A_j \phi - e^2 A_0^2 \phi) \\
& \quad+ ie(\partial_1 A_1 + \partial_2 A_2 - \partial_0 A_0) \phi \\
& = ie(\partial_1 A_1 + \partial_2 A_2 - \partial_0 A_0) \phi - U_{\overline{\phi}}(|\phi|^2,N) = -ie W \phi - U_{\overline{\phi}}(|\phi|^2,N) \, ,
\end{align*}
where we used (\ref{1.3}) in the last line. This implies
\begin{align*}
 \partial_1 Im(\phi \overline{D^1 \phi}) + \partial_2 Im(\phi \overline{D^2 \phi})  &= Im(\phi \overline{D_0^2\phi}) + e Im(i \phi W \overline{\phi}) - Im(\phi \overline{U_{\overline{\phi}}(|\phi|^2,N)}) \\
& = Im(\phi \overline{D_0^2  \phi}) + e W |\phi|^2
\end{align*}
using $Im(\phi \overline{U(|\phi|^2,N)}) = 0$ .
Furthermore
\begin{align*}
& Im(\phi \overline{D_0^2 \phi}) = Im(\phi (\overline{\partial_t - ie A_0)D_0 \phi}) \\
& = \partial_t Im(\phi \overline{D_0 \phi}) - Im(\partial_t \phi \overline{D_0 \phi}) + e Im(\phi i A_0 \overline{\partial_t \phi}) - e^2 Im(\phi A_0^2 \overline{\phi}) \\
& = \partial_t Im(\phi \overline{D_0 \phi}) - Im(\partial_t \phi \overline{\partial_t \phi}) - e Im(\partial_t \phi i A_0 \overline{\phi}) - e Im(\phi (-i) A_0 \overline{\partial_t \phi}) \\
& = \partial_t Im(\phi \overline{D_0 \phi}) \, .
\end{align*}
Collecting all these calculations we finally arrive at
$$ \Delta W = \partial_t V - 2e^2 |\phi|^2 W \, , $$
so that (\ref{9}) is proven and
\begin{equation}
\label{10}
(\partial_t^2 - \Delta)W = 2e^2 |\phi|^2 W \, .
\end{equation}
We remind that $W(0) = 0 \, , \, (\partial_t W)(0)=V(0) = 0$. For sufficiently regular $\phi$ this Cauchy problem for a linear wave equation is uniquely solvable by $W(t) \equiv 0$ . Thus by (\ref{9}) also $V(t) \equiv 0$, especially the Lorenz condition is satisfied. But the solutions obtained in Theorem \ref{Theorem1} are continously depending on the data and also persistence of higher regularity holds. Thus by regularization of the data we may assume here $\phi,\partial_t \phi \in C_0^{\infty}([0,t] \times {\mathbb R}^2)$ and $A_{\mu},\partial A_{\mu} \in C^{\infty}([0,T] \times {\mathbb R}^2)$ , which justifies our calculations.

Under the Lorenz condition $W \equiv 0$ the equations (\ref{1.1})and (\ref{1.2}) exactly reduce to (\ref{1.1*}),(\ref{1.2*}) and (\ref{1.3*}), which means that (\ref{2.1}) is fulfilled. Under the Lorenz condition we also have that (\ref{2.2}) and (\ref{2.3}) are equivalent to (\ref{1.3}) and (\ref{1.4}), respectively.

Summarizing we have shown that the solution of Theorem \ref{Theorem1} is a solution of the Cauchy problem for (\ref{2.1}),(\ref{2.2}),(\ref{2.3}) satisfying the Lorenz condition. 
As remarked earlier already the reverse is also true, so that uniqueness holds. 
\end{proof}
\begin{proof}[Proof of Corollary \ref{Corollary1}]
It suffices to show that any solution $\phi,A_{\mu},N \in C^0([0,T],H^s)$ $ \cap C^1([0,T],H^{s-1})$ belongs to $\phi,A_{\mu},N \in X^{\frac{1}{2}+,1}[0,T]$ , where uniqueness holds by Theorem \ref{Theorem2}. This follows if the right hand sides of (\ref{1.1'}), (\ref{1.2'}), (\ref{1.3'}) and (\ref{1.4'}) belong to $L^2([0,T],H^{\frac{1}{2}+})$. The quadratic terms are all treated similarly, e.g. $\langle \nabla \rangle^{-1} (A_{\mu} \partial_{\mu} \phi)$ is estimated by the Sobolev multiplication rule as follows:
$$\|A_{\mu} \partial_{\mu} \phi \|_{L_t^2([0,T],H^{-\frac{1}{2}+}_x)} \lesssim T^{\frac{1}{2}} \|A_{\mu}\|_{L^{\infty}_t([0,T],H^s)} \|\partial_{\mu} \phi\|_{L^{\infty}_t([0,T],H^{s-1})} < \infty \, . $$
The cubic terms are even easier to handle, e.g.
$$\|A_{\mu}^2 \phi \|_{L^2_t([0,T],H^{-\frac{1}{2}+}_x)} \lesssim T^{\frac{1}{2}} \|A_{\mu}\|^2_{L^{\infty}_t([0,T],L_x^4)} \|\phi\|_{L^{\infty}_t([0,T],H^s)} <\infty \, .$$
\end{proof}

\end{document}